\documentclass[11pt,reqno]{amsart}

\usepackage{amssymb,dirtytalk,comment,tikz-cd,tikz,color,cite,enumerate,setspace}
\usepackage[unicode=true]
 {hyperref}
\hypersetup{
colorlinks=true,
urlcolor=black,
citecolor=blue,
linkcolor=blue,
}
\usepackage{xltabular}
\usepackage{pdflscape}

\newtheorem{theorem}{Theorem}[section]
\newtheorem{lem}[theorem]{Lemma}
\newtheorem{prop}[theorem]{Proposition}

\theoremstyle{definition}
\newtheorem{definition}[theorem]{Definition}

\newtheorem*{notation*}{Notation}
\theoremstyle{remark}
\newtheorem{remark}[theorem]{Remark}

\numberwithin{equation}{section}

\newcolumntype{L}[1]{>{\raggedright\arraybackslash}p{#1}}
\newcolumntype{C}[1]{>{\centering\arraybackslash}p{#1}}
\newcolumntype{R}[1]{>{\raggedleft\arraybackslash}p{#1}}

\allowdisplaybreaks
\begin{document}

\title[Left series is not generated by right-normed star products]{Skew braces whose left series is not generated by right-normed star products}

\author{Cindy (Sin Yi) Tsang}
\address{Department of Mathematics, Ochanomizu University, 2-1-1 Otsuka, Bunkyo-ku, Tokyo, Japan}
\email{tsang.sin.yi@ocha.ac.jp}
\urladdr{http://sites.google.com/site/cindysinyitsang/}

\subjclass[2020]{Primary 20N99, Secondary 20F05 20F14}

\keywords{skew brace, left series, right-normed star product}

\begin{abstract} We construct explicit examples of skew braces $A$ for which the third term $A^3$ in the left series is not generated by the right-normed star products $a_1*(a_2*a_3)$ of length $3$ as a sub-skew brace.\end{abstract}

\maketitle

\vspace{-5mm}

\section{Introduction} 

Let $A = (A,+,\circ)$ be a \textit{skew brace}. This means that $(A,+)$ and $(A,\circ)$ are groups, not necessarily abelian, such that
\begin{equation}\label{eqn:relation}
 a \circ (b+c) = a\circ b -a + a\circ c
\end{equation}
holds for all $a,b,c\in A$. This relation, which resembles the left-distributivity in a ring, implies that $(A,+)$ and $(A,\circ)$ share the same identity element $0$. A \textit{sub-skew brace} is a subset that is a subgroup of both $(A,+)$ and $(A,\circ)$.

\smallskip

For any elements $a,b\in A$, their \textit{star product} is defined by
\[ a*b = - a + a\circ b - b.\]
Clearly $a*b=0$ if and only if $a\circ b = a+b$. Thus, the operation $*$ measures the difference between $(A,+)$ and $(A,\circ)$. Since the commutator of a group measures the difference between the group and its opposite group, in some sense, the star product is an analog of the commutator.

\smallskip

For any subsets $X,Y\subseteq A$, their \textit{star product} is defined by
\[ \langle x*y : x\in X,\, y\in X\rangle_+\]
where $\langle - \rangle_+$ denotes subgroup generation in $(A,+)$. Then $A*A$ is, in some sense, an analog of the commutator subgroup. Let us note that $A*A$ is a sub-skew brace (in fact an \textit{ideal}) of $A$; see \cite[Proposition 2.1]{series}.

\smallskip

In contrast to the group commutator, it is not true that $B*C= C*B$ in general for sub-skew braces $B,\, C$. As a consequence, there are at least two different ways to define analogs of the lower central series, as follows.

\begin{definition} The \textit{left series} of $A$ is defined by
\[ A^1 = A,\quad A^{(k+1)} = A*A^k\mbox{ for }k\geq 1.\]
The \textit{right series} of $A$ is defined by
\[ A^{(1)} = A,\quad A^{(k+1)} = A^{(k)} * A\mbox{ for }k\geq 1. \]
Let us remark that $A^k$ and $A^{(k)}$ are sub-skew braces (in fact \textit{left ideals} and \textit{ideals}, respectively) of $A$ for all $k\geq 1$; see \cite[Propositions 2.2 and 2.1]{series}.
\end{definition}

We mention in passing that there are other analogs of the lower central series in skew braces; see \cite[Definitions 2.10 and 2.13]{lower}.

\smallskip

It is well known that for a group, the $k$th term in the lower central series is generated by the right-normed commutators of length $k$, and similarly by the left-normed commutators of length $k$. It is natural to ask whether this admits an analog when we consider the left and right series. This question was investigated in \cite{Arai}, where the following definition was introduced:

\begin{definition} The \textit{verbal left series} of $A$ is defined by
\[ A^{\underline{k}} = \langle a_1 *( a_2*(\cdots * (a_{k-1}*a_k)\cdots )): a_1,\dots,a_k\in A \rangle\mbox{ for }k\geq 1.\]
The \textit{verbal right series} of $A$ is defined by
\[ A^{(\underline{k})} =\langle ((\cdots (a_1*a_2)*\cdots )*a_{k-1})*a_k : a_1,\dots,a_k\in A\rangle\mbox{ for }k\geq 1.\]
Here $\langle -\rangle$ denotes sub-skew brace generation in $A$.
\end{definition}

Obviously, we have 
\begin{align*} A &= A^1 = A^{\underline{1}} = A^{(1)} = A^{(\underline{1})},\\
A*A &= A^2 = A^{\underline{2}} = A^{(2)} = A^{(\underline{2})}.
\end{align*}
For $k\geq 3$, we always have the inclusions
\[ A^{\underline{k}}\subseteq A^{k},\quad A^{(\underline{k})} \subseteq A^{(k)}, \]
but it is unclear whether they are equalities in general. It was shown in \cite[Propositions 3.5 and 3.6]{Arai}, respectively, that $A^{\underline{k}} = A^{k}$ and $A^{(\underline{k})} = A^{(k)}$ for all $k\geq 1$, when some extra assumptions are imposed.

\smallskip

In this paper, we focus on the left series and verbal left series. The goal is to exhibit explicit examples of skew braces $A$ such that $A^{\underline{3}}\subsetneq A^{3}$. We will first describe a general method of constructing such examples in Section \ref{sec:construction}. We then construct an infinite family of examples in Section \ref{sec:1}.

\section{A general method of construction}\label{sec:construction}

For any group $B=(B,\bullet)$ and $\varphi \in \mathrm{Aut}(B)$, let $A = (B\times \langle \varphi\rangle, + ,\circ)$ with
\begin{align*}
(b_1,\varphi^{i_1}) + (b_2,\varphi^{i_2}) & = (b_1\bullet b_2,\varphi^{i_1+i_2}),\\
(b_1,\varphi^{i_1}) \circ (b_2,\varphi^{i_2})  & = (b_1\bullet \varphi^{i_1}(b_2),\varphi^{i_1+i_2}).
\end{align*}
Hence $(A,+)$ is the direct product $B\times \langle \varphi\rangle$, and $(A,\circ)$ is the natural semi-direct product $B\rtimes \langle \varphi\rangle$. It is easy to verify \eqref{eqn:relation}, so that $A$ is a skew brace. In fact, this method of construction is well-known; see \cite[Example 1.4]{skew}.

\smallskip

For any $b\in B$ and $i\in \mathbb{Z}$, define the map
\[ \varphi_i : B\rightarrow B;\quad \varphi_i(b)= \varphi^i(b)\bullet b^{-1}.\]
For any $b_1,b_2\in B$ and $i_1,i_2\in \mathbb{Z}$, we then see that
\begin{align*}
(b_1,\varphi^{i_1}) *(b_2,\varphi^{i_2}) 
& = (\varphi^{i_1}(b_2)\bullet b_2^{-1},\mathrm{id}) = (\varphi_{i_1}(b_2),\mathrm{id}).
\end{align*}
We now deduce that
\begin{align*}
\{ a_1*a_2: a_1,a_2\in A\} & = \{ (\varphi_j(b),\mathrm{id}): b\in B,\, j\in\mathbb{Z}\},\\
\{a_1*(a_2*a_3) : a_1,a_2,a_3\in A\} & = \{ (\varphi_i(\varphi_j(b)),\mathrm{id}):b\in B,\, i,j\in\mathbb{Z}\}.
\end{align*}
Since both $+$ and $\circ$ coincide with $\bullet$ on $B\times \{\mathrm{id}\}$, it follows that
\begin{align}\label{eqn:A*A}
A*A & = \langle \varphi_j(b): b\in B,\, j\in\mathbb{Z}\rangle_\bullet\times \{\mathrm{id}\},\\\notag
A^3 & = \langle \varphi_i(b') : b' \in \langle \varphi_j(b): b\in B,\, j\in\mathbb{Z}\rangle_\bullet,\, i\in \mathbb{Z}\rangle_\bullet \times \{\mathrm{id}\},\\\notag
A^{\underline{3}} & = \langle\varphi_i(\varphi_j(b)): b \in B,\, i,j\in\mathbb{Z}\rangle_\bullet \times \{\mathrm{id}\},
\end{align}
where $\langle-\rangle_\bullet$ denotes subgroup generation in $B$. We then obtain:

\begin{prop}\label{prop:general}Suppose that
\begin{align}\notag
&\langle \varphi_i(\varphi_j(b)) : b\in B,\, i,j\in\mathbb{Z}\rangle_\bullet\\\label{eqn:condition}
&\hspace{1.5cm}\subsetneq \langle \varphi_i(b') : b'\in \langle \varphi_j(b) : b \in B,\, j\in \mathbb{Z}\rangle_\bullet,\, i\in\mathbb{Z} \rangle_\bullet.
 \end{align}
Then the skew brace $A$ constructed above satisfies $A^{\underline{3}}\subsetneq A^3$.
\end{prop}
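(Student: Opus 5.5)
The plan is to read the strict inclusion off the descriptions of $A^3$ and $A^{\underline{3}}$ in \eqref{eqn:A*A}, so the main work is to justify those descriptions carefully.

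\emph{Step 1: the star product formula.} For $a_1=(b_1,\varphi^{i_1})$ and $a_2=(b_2,\varphi^{i_2})$ we compute $-a_1+a_1\circ a_2-a_2$ coordinatewise. In the first coordinate this is $b_1^{-1}\bullet b_1\bullet\varphi^{i_1}(b_2)\bullet b_2^{-1}=\varphi_{i_1}(b_2)$, and in the second it is $\varphi^{-i_1+i_1+i_2-i_2}=\mathrm{id}$. Two consequences follow.
\begin{enumerate}[(i)]
\item $a_1*a_2=(\varphi_{i_1}(b_2),\mathrm{id})$ depends only on $i_1$ and $b_2$.
\item The set of all star products is $\{(\varphi_j(b),\mathrm{id})\}$, and iterating gives the set of right-normed products of length $3$, namely $\{(\varphi_i(\varphi_j(b)),\mathrm{id})\}$.
\end{enumerate}

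\emph{Step 2: generation on $B\times\{\mathrm{id}\}$.} On $B\times\{\mathrm{id}\}$, both $+$ and $\circ$ restrict to $\bullet$, since $\varphi^0=\mathrm{id}$. The inverses agree as well, because $\circ$-inverses of elements with trivial second coordinate are $(b^{-1},\mathrm{id})$. Hence for any subset $S\subseteq B\times\{\mathrm{id}\}$, the sub-skew brace generated by $S$ equals the additive subgroup generated by $S$, which is $\langle S\rangle_\bullet\times\{\mathrm{id}\}$.

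\emph{Step 3: compute $A^{\underline 3}$ and $A^3$.}
\begin{enumerate}[(i)]
\item Applying Step 2 to the right-normed products of length $3$ from Step 1 gives the stated formula for $A^{\underline 3}$.
\item $A*A$ is the additive subgroup generated by the star products, which is $A^2=H\times\{\mathrm{id}\}$ with $H=\langle\varphi_j(b)\rangle_\bullet$.
\item $A^3=A*A^2$ is additively generated by $a*(h,\mathrm{id})=(\varphi_i(h),\mathrm{id})$ for $h\in H$ and $i\in\mathbb{Z}$, which gives the stated formula for $A^3$.
\end{enumerate}

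\emph{Step 4: conclusion.} The hypothesis \eqref{eqn:condition} says precisely that the first factor of $A^{\underline 3}$ is strictly smaller than that of $A^3$. Since both sets are of the form (subgroup)$\,\times\{\mathrm{id}\}$, this gives $A^{\underline 3}\subsetneq A^3$.

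No step is hard. The only point needing attention is Step 2: the sub-skew brace generation defining $A^{\underline 3}$ must not produce anything beyond the additive subgroup, and this holds because $+$ and $\circ$ coincide on $B\times\{\mathrm{id}\}$.
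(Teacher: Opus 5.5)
Your proof is correct and follows essentially the same route as the paper. You compute $(b_1,\varphi^{i_1})*(b_2,\varphi^{i_2})=(\varphi_{i_1}(b_2),\mathrm{id})$, use that $+$ and $\circ$ both restrict to $\bullet$ on $B\times\{\mathrm{id}\}$ to obtain the descriptions of $A^{\underline{3}}$ and $A^3$ in \eqref{eqn:A*A}, and then read off the strict inclusion from \eqref{eqn:condition}. Your Step 2 simply spells out, including the remark about $\circ$-inverses, the justification that the paper gives in one line.
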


\begin{remark} For any $b,b'\in B$ and $i\in \mathbb{Z}$, we have
\[ (b',\mathrm{id}) * (b,\varphi^i)  = (\varphi_0(b),\mathrm{id}) = (1,\mathrm{id}).\]
Together with \eqref{eqn:A*A}, this yields $A^{(\underline{3})} = A^{(3)} = 0$. Thus, the above construction cannot be used to create skew braces $A$ for which $A^{(\underline{3})}\subsetneq A^{(3)}$.
\end{remark}

We now give examples of $B = (B,\bullet)$ and $\varphi\in \mathrm{Aut}(B)$ satisfying \eqref{eqn:condition}.
\section{An infinite family of examples}\label{sec:1}

We will apply Proposition \ref{prop:general} to prove:

\begin{theorem}\label{thm1} There exist infinitely many skew braces $A$ of order a power of $2$ such that $A^{\underline{3}}\subsetneq A^3$.
\end{theorem}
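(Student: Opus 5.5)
The plan is to apply Proposition~\ref{prop:general}: for each member of a family of finite $2$-groups $B$, I would choose an automorphism $\varphi$ of $2$-power order and verify the strict inclusion \eqref{eqn:condition}. Then $A=B\times\langle\varphi\rangle$ has order a power of $2$ and satisfies $A^{\underline{3}}\subsetneq A^3$.

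\textbf{Step 1: notation.} Write $D=\langle \varphi_j(b): b\in B,\ j\in\mathbb{Z}\rangle_\bullet$. This is the normal subgroup $[B,\langle\varphi\rangle]$, and it is $\varphi$-invariant. Let $V$ and $W$ denote the left- and right-hand sides of \eqref{eqn:condition}, so $V\subseteq W=\langle\varphi_i(d): d\in D,\ i\in\mathbb{Z}\rangle_\bullet$.

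\textbf{Step 2: the mechanism.} If $B$ is abelian, each $\varphi_i$ is a homomorphism, and then $V=W$. So $B$ must be nonabelian, and the gap must come from the cocycle identity
\[ \varphi_i(xy)=\varphi_i(x)\bullet x\varphi_i(y)x^{-1}. \]
This shows that $W$ is generated by the $D$-conjugates of the elements $\varphi_i(\varphi_j(b))$, whereas $V$ is generated only by those elements themselves. The aim is therefore a group in which $V$ fails to be normalized by $D$, or more generally does not contain the relevant conjugates.

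\textbf{Step 3: the case $\varphi$ of order $2$.} This case makes the computation transparent. Only $\varphi_1$ matters, since $\varphi_0$ is trivial and $\varphi_i=\varphi_{i\bmod 2}$. For $x=\varphi_1(b)$ one checks $\varphi(x)=x^{-1}$, hence $\varphi_1(x)=x^{-2}$. So $V$ is generated by the squares $x^{2}$ with $x=\varphi(b)b^{-1}$. On the other hand, for two such generators $x,y$ of $D$,
\[ \varphi_1(xy)=x^{-1}y^{-2}x^{-1}, \]
which lies in $W$. It suffices to find $B$ and $b,b'\in B$ such that this element, equivalently the conjugate $x^{-1}y^{-2}x$, is not in the subgroup generated by all squares $\varphi_1(b'')^{2}$.

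I would look for such $B$ among small $2$-groups of nilpotency class $2$ or $3$, for instance groups given by a presentation on generators $u,v,\dots$ with $\varphi$ swapping or inverting generators. I would locate a smallest instance by a GAP search and then describe it by a presentation.

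\textbf{Step 4: verifying the strict inclusion.} First compute $V$ explicitly as a subgroup, for example by showing it lies in a specified normal-closure-free subgroup such as a cyclic or abelian subgroup of $D$. Then exhibit the element $x^{-1}y^{-2}x^{-1}\in W$ outside it. To show $V$ is small, I would prove that the set $\{\varphi_i(\varphi_j(b))\}$ is contained in a proper subgroup $H$ of $W$: compute $\varphi_i(\varphi_j(b))$ for arbitrary $b$ written in normal form, and check that it lands in $H$.

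\textbf{Step 5: infinitely many examples.} Parametrize the construction, say by $n$, with $B$ a $2$-group of order growing with $n$, such as generators of order $2^n$ in an analogous presentation. Repeat the normal-form computation uniformly in $n$. A cheaper fallback is to replace $B$ by $B\times C$ with $C$ an abelian $2$-group of arbitrary size on which $\varphi$ acts trivially. Then $\varphi_i$ is trivial on $C$, so both sides of \eqref{eqn:condition} are unchanged and the strict inclusion persists.

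\textbf{Main obstacle.} I expect the main obstacle to be Step 4: computing $V$ exactly, i.e.\ controlling \emph{all} values $\varphi_i(\varphi_j(b))$ rather than just generators. This requires a clean normal form for $B$ and a careful expansion of $\varphi_i$ on arbitrary products.
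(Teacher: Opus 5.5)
Your reductions are sound. For abelian $B$ one does get equality in \eqref{eqn:condition}, and the cocycle identity shows that the right-hand side $W$ is the normal closure of the left-hand side $V$ under conjugation by $D$. Your formulas $\varphi_1(x)=x^{-2}$ and $\varphi_1(xy)=x^{-1}y^{-2}x^{-1}$ for $\varphi^2=\mathrm{id}$ are correct. Replacing $B$ by $B\times C$, with $\varphi$ trivial on an abelian $2$-group $C$, is also valid: it yields infinitely many examples from a single one, more cheaply than the paper's family indexed by $n$.

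The gap is that the theorem is an existence statement, and its whole content is one explicit pair $(B,\varphi)$ together with a proof that $V\subsetneq W$. Your proposal never supplies this. Step~3 defers the choice of $B$ to a GAP search that is not carried out, and Step~4 only describes what one would check. What is missing is a concrete subgroup (or invariant) containing $V$, together with an explicit element of $W$ lying outside it.

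In the paper this is done with the twisted Heisenberg group on $(\mathbb{Z}/2^n\mathbb{Z})^3$, the automorphism $\varphi=\mathrm{diag}(s,t,st)$ with $s=1+2^m$ and $t=1-2^m$, and the subgroup $U$. Lemma~\ref{lem:U1} gets $V\subseteq U$ from conditions (1)--(3). Lemma~\ref{lem:U2} exhibits $b'=(0,0,-2^\ell(s-1)t)^T\in D$ with $\varphi_1(b')\notin U$, using condition (4).

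Your order-$2$ route can in fact be completed, so this is a missing step rather than a wrong approach. Here is one way to do it.
\begin{itemize}
\item Let $K$ be the Heisenberg group on $(\mathbb{Z}/2^N\mathbb{Z})^3$ with $(x,y,z)(x',y',z')=(x+x',y+y',z+z'+xy')$ and $N\geq 3$. Put $B=K\times K$ and let $\varphi$ swap the factors.
\item Then $\varphi_1(k_1,k_2)=(k_2k_1^{-1},k_1k_2^{-1})$. So the elements $\varphi_j(b)$ are exactly the pairs $(a,a^{-1})$, and $V=\langle (g^2,g^{-2}) : g\in K\rangle_\bullet$.
\item Take $a=(1,0,0)$, $b=(0,1,0)$ and $c=a^{-1}b^{-1}ab=(0,0,1)$. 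Then $(a,a^{-1})(b,b^{-1})((ab)^{-1},ab)=(1,c)$ lies in $D$, hence $\varphi_1(1,c)=(c,c^{-1})\in W$.
\item Let $M$ be the subgroup of pairs $((x_1,y_1,z_1),(x_2,y_2,z_2))$ with $x_2=-x_1$, $y_2=-y_1$ and $x_1,y_1$ even. On $M$, the map $\Theta=z_2-3z_1 \bmod 8$ is a homomorphism, since the cross term is $-2x_1y_1'\equiv 0 \pmod 8$.
\item For $g=(x,y,z)$ one has $g^2=(2x,2y,2z+xy)$ and $g^{-2}=(-2x,-2y,-2z+3xy)$. So $\Theta(g^2,g^{-2})=-8z\equiv 0$, and therefore $V\subseteq M\cap\ker\Theta$.
\item But $\Theta(c,c^{-1})=-4\not\equiv 0\pmod 8$.
\end{itemize}
Hence $V\subsetneq W$ here, and $|A|=2^{6N+1}$. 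An argument of this kind, with a specific group and a specific separating invariant, is exactly what your Steps~3--4 leave undone.
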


Let $n\geq 4$ and $n\geq \ell \geq 0$ be integers. Let $B = (B,\bullet)$ be the set $(\mathbb{Z}/2^n\mathbb{Z})^3$ equipped with the operation $\bullet$ defined by
\[ \begin{pmatrix}x_1\\x_2\\x_3\end{pmatrix} \bullet \begin{pmatrix} y_1\\y_2\\y_3\end{pmatrix}
 \begin{pmatrix} x_1+y_1\\x_2+y_2\\x_3+y_3+2^\ell x_1y_2 \end{pmatrix}.\]
 It is straightforward to check that $B$ is a group. The identity element is the zero vector, and the inverse of an arbitrary element is given by
 \[ \begin{pmatrix}x_1\\x_2\\x_3\end{pmatrix}^{-1} = \begin{pmatrix} -x_1\\-x_2 \\ -x_3 + 2^\ell x_1x_2\end{pmatrix}. \]
Let $s,t$ be any odd integers. Again, it is straightforward to check that
\[ \varphi: \begin{pmatrix}x_1\\x_2\\x_3\end{pmatrix} \mapsto \begin{pmatrix} sx_1\\tx_2\\stx_3\end{pmatrix}\]
defines an automorphism of $B$. For each $i\in \mathbb{Z}$, we have
\[ \varphi_i : \begin{pmatrix} x_1\\x_2\\x_3\end{pmatrix} \mapsto \begin{pmatrix} s^ix_1\\t^ix_2\\s^it^ix_3\end{pmatrix}\bullet \begin{pmatrix}x_1\\x_2\\x_3\end{pmatrix}^{-1} = \begin{pmatrix}
(s^i-1)x_1\\ (t^i-1)x_2\\ (s^it^i-1)x_3 - 2^\ell (s^i-1)x_1x_2\end{pmatrix}.\]
We will assume that $\ell$ and $s,t$ satisfy all of the following:
\begin{enumerate}[(1)]
\item $(s-1)^2(t-1)^2 \equiv 0\pmod{2^{n-\ell}}$;
\item $(st-1)^2 \equiv0 \pmod{2^n}$;
\item $ (s^j-1)\left\{(s^it^i -1) + (s^i-1)(t^j-1)\right\} \equiv 0\pmod{2^{n-\ell}}$ for all $i,j\in\mathbb{Z}$;
\item $(s-1)(st-1)\not\equiv 0\pmod{2^{n-\ell}}$.
\end{enumerate}
Moreover, for simplicity let us put
\[ U = \left\{ \begin{pmatrix}x_1\\x_2\\0\end{pmatrix} \in B : \begin{array}{c}
x_1\equiv 0 \hspace{-2mm}\pmod{2^{2v_2(s-1)}}\\
x_2\equiv 0 \hspace{-2mm}\pmod{2^{2v_2(t-1)}}
\end{array}\right\},\]
where $v_2(-)$ denotes $2$-adic valuation.

\begin{lem}\label{lem:U} The above subset $U$ is a subgroup of $B$.
\end{lem}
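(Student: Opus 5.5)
The plan is to verify the subgroup axioms directly from the formulas for $\bullet$ and for inverses in $B$. Everything reduces to one divisibility fact, which will follow from condition (1). Write $a=v_2(s-1)$ and $b=v_2(t-1)$. These are positive integers, or $\infty$ if $s=1$ or $t=1$. Condition (4) rules out $s=1$. If $t=1$, I read the congruence $x_2\equiv 0 \pmod{2^{2b}}$ as $x_2=0$ in $\mathbb{Z}/2^n\mathbb{Z}$, which is harmless.

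Clearly the zero vector lies in $U$. The key observation is the following. Suppose $x_1\equiv 0 \pmod{2^{2a}}$ and $y_2\equiv 0\pmod{2^{2b}}$. Then $x_1y_2$ is divisible by $2^{2a+2b}$. By definition,
\[ 2a+2b = v_2\big((s-1)^2(t-1)^2\big)\ \geq\ n-\ell \]
by condition (1). This also covers the case $(s-1)^2(t-1)^2=0$, where one factor is forced to be $0$. Hence $2^\ell x_1y_2\equiv 0\pmod{2^n}$.

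For closure under $\bullet$, take $(x_1,x_2,0)^T$ and $(y_1,y_2,0)^T$ in $U$. Their product is $(x_1+y_1,\,x_2+y_2,\,2^\ell x_1y_2)^T$. The first two coordinates satisfy the required congruences, since each congruence class condition is closed under addition. The third coordinate vanishes by the observation above.

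For inverses, the inverse of $(x_1,x_2,0)^T$ is $(-x_1,-x_2,2^\ell x_1x_2)^T$. The same argument, applied with $y_2=x_2$, shows that the third coordinate is $0$. The first two coordinates clearly keep their divisibility.

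There is no real obstacle here. The only points needing care are translating condition (1) into the valuation inequality $2v_2(s-1)+2v_2(t-1)\ge n-\ell$, and the degenerate case $t=1$.
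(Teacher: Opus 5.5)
Your proof is correct and is essentially the paper's argument. The paper uses the one-step subgroup test, computing $b\bullet c^{-1}$ for $b,c\in U$ (third coordinate $-2^\ell(x_1-y_1)y_2$) and killing it with the same consequence $\ell+2v_2(s-1)+2v_2(t-1)\geq n$ of condition (1), whereas you check products and inverses separately; your extra care with $t=1$ is harmless (in fact (2) and (4) together already exclude $t=1$, since then $st-1=s-1$).
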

\begin{proof} For any $x_1,x_2,y_1,y_2\in \mathbb{Z}/2^n\mathbb{Z}$, observe that
\begin{equation}\label{eqn:difference} \begin{pmatrix}
x_1\\x_2\\0\end{pmatrix}\bullet \begin{pmatrix} y_1\\y_2\\0\end{pmatrix}^{-1} = 
\begin{pmatrix} x_1-y_1\\ x_2-y_2\\
-2^\ell(x_1-y_1)y_2
\end{pmatrix}.\end{equation}
For any $(x_1,x_2,0)^T,(y_1,y_2,0)^T\in U$, we then clearly have
\[ x_1-y_1\equiv 0\hspace{-2mm}\pmod{2^{2v_2(s-1)}},\quad x_2-y_2\equiv 0\hspace{-2mm}\pmod{2^{2v_2(t-1)}},\]
\[ -2^\ell(x_1-y_1)y_2 \equiv 0 \hspace{-2mm}\pmod{2^{\ell + 2v_2(s-1)+2v_2(t-1)}},\]
and the last modulus divides $2^n$ by condition (1). Thus \eqref{eqn:difference} lies in $U$.\end{proof}

Note that the skew brace $A = (B\times \langle\varphi\rangle,+,\circ)$ constructed in Section \ref{sec:construction} is of order $2^{3n}|\varphi|$ here, which is a power of $2$. We show that \eqref{eqn:condition} is satisfied.

\begin{lem}\label{lem:U1}We have $\langle \varphi_i(\varphi_j(b)) : b\in B,\, i,j\in\mathbb{Z}\rangle_\bullet \subseteq U$.
\end{lem}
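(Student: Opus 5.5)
Since $U$ is a subgroup of $B$ by Lemma \ref{lem:U}, it suffices to show that every generator $\varphi_i(\varphi_j(b))$ lies in $U$. Write $b=(x_1,x_2,x_3)^T$. By the formula for $\varphi_j$, we have $\varphi_j(b)=(y_1,y_2,y_3)^T$ with
\[ y_1=(s^j-1)x_1,\quad y_2=(t^j-1)x_2,\quad y_3=(s^jt^j-1)x_3-2^\ell(s^j-1)x_1x_2. \]
Applying $\varphi_i$ once more gives first coordinate $(s^i-1)(s^j-1)x_1$ and second coordinate $(t^i-1)(t^j-1)x_2$. The third coordinate is
\[ (s^it^i-1)y_3-2^\ell(s^i-1)y_1y_2. \]
The plan is to check the two congruence conditions on the first two coordinates and then show that the third coordinate vanishes modulo $2^n$.

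For the first two coordinates, note that $s^k-1$ is divisible by $s-1$ for every integer $k$; for negative $k$ this still holds, since $s^{-1}$ is a unit and $s^k-1=-s^k(s^{-k}-1)$. Hence $v_2(s^k-1)\ge v_2(s-1)$, and similarly for $t$. (When $s=1$ the valuation is infinite and the condition reads $x_1\equiv0$; this is consistent, since then $s^k-1=0$.) It follows that the first coordinate is divisible by $2^{2v_2(s-1)}$ and the second by $2^{2v_2(t-1)}$, as required.

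For the third coordinate, expand
\[ (s^it^i-1)y_3=(s^it^i-1)(s^jt^j-1)x_3-2^\ell(s^it^i-1)(s^j-1)x_1x_2. \]
Since $st-1$ divides both $s^it^i-1$ and $s^jt^j-1$, the first term is divisible by $(st-1)^2$, so it vanishes modulo $2^n$ by condition (2). The remaining terms combine to
\[ -2^\ell x_1x_2(s^j-1)\left\{(s^it^i-1)+(s^i-1)(t^j-1)\right\}, \]
which vanishes modulo $2^n$ by condition (3), thanks to the factor $2^\ell$. Hence the third coordinate is $0$, so $\varphi_i(\varphi_j(b))\in U$, and the subgroup generated by these elements is contained in $U$.

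The main point requiring care is the third coordinate, where the cross terms have to be grouped so that they match condition (3) exactly. The other delicate point is handling negative exponents in the divisibility $(s-1)\mid(s^k-1)$, which is done by working in the unit group of $\mathbb{Z}/2^n\mathbb{Z}$ as above.
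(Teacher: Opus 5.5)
Your proof is correct and follows the paper's argument essentially step for step. You reduce to generators via Lemma \ref{lem:U}, bound the first two coordinates by $(s-1)^2$ and $(t-1)^2$, and kill the $x_3$ and $x_1x_2$ coefficients of the third coordinate with conditions (2) and (3); the only cosmetic difference is that you handle negative exponents via $s^k-1=-s^k(s^{-k}-1)$, whereas the paper simply assumes $i,j\geq 1$ (legitimate since $\varphi$ has finite order).
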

\begin{proof} By Lemma \ref{lem:U}, it is enough to show that the generators all lie in $U$. For any $b = (x_1,x_2,x_3)^T\in B$ and $i,j\in \mathbb{Z}$, observe that
\[ \varphi_i(\varphi_j(b))
= \begin{pmatrix}
(s^i-1)(s^j-1)x_1\\ (t^i-1)(t^j-1)x_2\\
\clubsuit
\end{pmatrix},
\]
where the third coordinate $\clubsuit$ is given by
\begin{align*}
&(s^it^i-1)\left\{(s^jt^j-1)x_3 - 2^\ell(s^j-1)x_1x_2\right\}\\
&\hspace{2cm} - 2^\ell(s^i-1)\left\{(s^j-1)x_1\right\}\left\{(t^j-1)x_2\right\}.
\end{align*}
We may assume that $i,j\geq 1$. For the first and second coordinates, respectively, we clearly have
\begin{align*}
(s^i-1)(s^j-1)x_1&\equiv (s-1)^2 (\mbox{some integer})\equiv0&&\hspace{-11mm}\pmod{2^{2v_2(s-1)}},\\
(t^i-1)(t^j-1)x_2&\equiv (t-1)^2(\mbox{some integer})\equiv0&&\hspace{-11mm}\pmod{2^{2v_2(t-1)}}.
\end{align*}
For the third coordinate, similarly the coefficient of $x_3$ is 
\[ (s^it^i-1)(s^jt^j-1) \equiv (st-1)^2(\mbox{some integer}) \equiv 0 \hspace{-2mm}\pmod{2^n}\]
by condition (2), while the coefficient of $x_1x_2$ is 
\[ -2^\ell(s^j-1)\left\{ (s^it^i-1) + (s^i-1)(t^j-1)\right\} \equiv 0\hspace{-2mm}\pmod{2^n}\]
by condition (3). Thus, we have $\varphi_i(\varphi_j(b))\in U$, as desired. \end{proof}

\begin{lem}\label{lem:U2} We have $\langle \varphi_i(b') : b'\in \langle \varphi_j(b) : b \in B,\, j\in \mathbb{Z}\rangle_\bullet ,\, i\in \mathbb{Z}\rangle_\bullet\not\subseteq U$.
\end{lem}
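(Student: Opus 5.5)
The plan is to exhibit one explicit generator of the larger group that lies outside $U$. Since $U$ is a set, showing that a single generator $\varphi_i(b')$ is not in $U$ already shows that the generated subgroup is not contained in $U$. Membership in $U$ requires the third coordinate to vanish, so I will look for an element $b'\in\langle \varphi_j(b): b\in B,\, j\in\mathbb{Z}\rangle_\bullet$ for which $\varphi_1(b')$ has nonzero third coordinate. Conditions (3) and (4) will be combined to force this.

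\textbf{Step 1 (choice of $b'$).} Let $e_1=(1,0,0)^T$ and $e_2=(0,1,0)^T$. By the formula for $\varphi_i$ we have $\varphi_1(e_1)=(s-1,0,0)^T$ and $\varphi_1(e_2)=(0,t-1,0)^T$. Take
\[ b'' = \varphi_1(e_2)\bullet\varphi_1(e_1) = \begin{pmatrix} s-1\\ t-1\\ 0\end{pmatrix}. \]
The correction term $2^\ell x_1y_2$ vanishes here because the first factor has $x_1=0$. This $b''$ is a product of elements $\varphi_j(b)$, so it lies in the inner subgroup.

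\textbf{Step 2 (compute $\varphi_1(b'')$).} By the formula for $\varphi_1$, the third coordinate of $\varphi_1(b'')$ is
\[ (st-1)\cdot 0 - 2^\ell (s-1)(s-1)(t-1) = -2^\ell (s-1)^2(t-1). \]

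\textbf{Step 3 (show this is nonzero mod $2^n$).} This is the one step that needs an idea rather than computation. Condition (3) with $i=j=1$ gives
\[ 2^\ell(s-1)\{(st-1)+(s-1)(t-1)\}\equiv 0 \pmod{2^n}. \]
Rearranging,
\[ -2^\ell(s-1)^2(t-1)\equiv 2^\ell(s-1)(st-1)\pmod{2^n}. \]
Condition (4) states that $(s-1)(st-1)\not\equiv 0 \pmod{2^{n-\ell}}$, which is equivalent to $2^\ell(s-1)(st-1)\not\equiv 0\pmod{2^n}$.

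Hence the third coordinate of $\varphi_1(b'')$ is nonzero. Therefore $\varphi_1(b'')\notin U$, and it is one of the generators of the left-hand group in Lemma \ref{lem:U2}, which proves the lemma.
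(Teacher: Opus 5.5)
Your proof is correct and follows the paper's strategy: exhibit one generator $\varphi_1(b')$ whose third coordinate is nonzero modulo $2^n$. The difference lies in the witness you choose and in which hypotheses you use.

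\textbf{The paper's witness.} The paper takes the three-fold product
\[ b'=\varphi_1(1,1,0)^T\bullet\varphi_1(0,-1,0)^T\bullet\varphi_1(-1,0,0)^T=(0,0,-2^\ell(s-1)t)^T. \]
Its first two coordinates are zero, so the cross term $-2^\ell(s-1)x_1x_2$ of $\varphi_1$ drops out. The third coordinate of $\varphi_1(b')$ is then $-2^\ell(s-1)(st-1)t$, which is nonzero by condition (4) alone, since $t$ is odd.

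\textbf{Your witness.} Your $b''=(s-1,t-1,0)^T$ is simpler, but here the $x_3$-term vanishes and the cross term survives. So you need condition (3) at $i=j=1$ to turn $-2^\ell(s-1)^2(t-1)$ into $2^\ell(s-1)(st-1)$ before (4) applies.

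\textbf{How the two are linked.} We have $b''=\varphi_1(1,1,0)^T\bullet(0,0,2^\ell(s-1))^T$, and the second factor is central. Hence
\[ \varphi_1(b'')=\varphi_1(\varphi_1(1,1,0)^T)\bullet(0,0,2^\ell(s-1)(st-1))^T, \]
and condition (3) at $i=j=1$ is precisely what makes the first factor have zero third coordinate.

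So you save a factor in the construction at the price of an extra hypothesis. The paper's choice instead makes Lemma~\ref{lem:U2} depend on condition (4) only, keeping it cleanly separate from Lemma~\ref{lem:U1}, which uses (1)--(3).
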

\begin{proof} The subgroup $\langle \varphi_j(b) : b \in B,\, j \in \mathbb{Z}\rangle_\bullet$ contains
\[b'=\varphi_1\begin{pmatrix}1\\1\\0\end{pmatrix}\bullet \varphi_1\begin{pmatrix}0\\-1\\0\end{pmatrix}\bullet \varphi_1\begin{pmatrix}-1\\0\\0\end{pmatrix},\]
and we compute that
\[b' = \begin{pmatrix} s-1\\t-1\\-2^\ell(s-1)\end{pmatrix}\bullet \begin{pmatrix} 0 \\ 1-t \\ 0 \end{pmatrix}\bullet\begin{pmatrix}1-s\\0\\0\end{pmatrix}= 
\begin{pmatrix}
0\\0\\ -2^\ell(s-1)t
\end{pmatrix}.\]
Hence, the subgroup $\langle \varphi_i(b') : b'\in \langle \varphi_j(b) : b \in B,\, j\in \mathbb{Z}\rangle_\bullet ,\, i\in \mathbb{Z}\rangle_\bullet$ contains
\[ \varphi_1(b')= \begin{pmatrix}
0\\0\\ -2^\ell(s-1)(st-1)t\end{pmatrix}.
\]
But this is not an element of $U$ by condition (4).
\end{proof}

Lemmas \ref{lem:U1} and \ref{lem:U2} imply that the condition \eqref{eqn:condition} in Proposition \ref{prop:general} is satisfied. Theorem \ref{thm1} now follows from Proposition \ref{prop:n}, which shows that we can choose $\ell$ and $s,t$ to satisfy conditions (1) $\sim$ (4) when $n\neq 5,6,9$.

\smallskip

Before we proceed, let us consider the $m\in\mathbb{N}$ that satisfy
\[ 3m+1\leq n\leq 4m,\quad\mbox{or equivalently},\quad\frac{n}{4} \leq m \leq \frac{n-1}{3}.\]
For $4\leq n\leq 15$, we have the following, where $\times$ means ``no solution".
\[ \begin{tabular}{|C{5mm}||C{5mm}|C{5mm}|C{5mm}|C{5mm}|C{5mm}|C{5mm}|C{5mm}|C{5mm}|C{5mm}|C{5mm}|C{5mm}|C{5mm}|}
\hline
$n$ & $4$ & $5$ & $6$ & $7$ & $8$ & $9$ & $10$ & $11$ & $12$ & $13$ & $14$ & $15$ \\\hline
$m$ & $1$ & $\times$ & $\times$ & $2$ & $2$ & $\times$ & $3$ & $3$ & $3$ & $4$ &$4$  & $4$ \\\hline
\end{tabular}\] 
For $n\geq 16$, there is always a solution $m\in\mathbb{N}$ because
\[ \frac{n-1}{3} - \frac{n}{4} = \frac{n-4}{12} \geq 1.\]
This is why we exclude the values $n=5,6,9$ in  Proposition \ref{prop:n} below.

\begin{prop}\label{prop:n} For $n\geq 4$ with $n\neq 5,6,9$, let $m\in \mathbb{N}$ be such that 
\[3m+1\leq n\leq 4m.\]
Then conditions $(1)$ $\sim$ $(4)$ hold for
\[ \ell = n - (3m+1),\quad s = 1+2^m,\quad t = 1-2^m.\]
\end{prop}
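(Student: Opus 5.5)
The plan is to substitute $\ell = n-(3m+1)$, $s=1+2^m$, $t=1-2^m$ directly into conditions (1) $\sim$ (4), using throughout that $n-\ell = 3m+1$. Conditions (1), (2) and (4) are immediate from the valuations of the relevant integers. Condition (3) is the only one needing real work.

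\emph{Conditions (1), (2), (4).} We have $s-1 = 2^m$, $t-1=-2^m$ and $st-1 = -2^{2m}$. For (1), $(s-1)^2(t-1)^2 = 2^{4m}$, and $4m\geq 3m+1$ because $m\geq 1$. For (2), $(st-1)^2 = 2^{4m}$, which is $\equiv 0 \pmod{2^n}$ precisely because $n\leq 4m$. For (4), $(s-1)(st-1) = -2^{3m}$, which is not divisible by $2^{3m+1}=2^{n-\ell}$.

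\emph{Condition (3): reduction.} We must show that
\[ (s^j-1)\left\{(s^it^i-1)+(s^i-1)(t^j-1)\right\}\equiv 0 \pmod{2^{3m+1}} \]
for all $i,j\in\mathbb{Z}$. Since $s,t$ are odd, we may regard $s^i,t^j$ as elements of $\mathbb{Z}_2$ (or reduce $i,j$ modulo the multiplicative orders modulo $2^{3m+1}$). Then the binomial-type congruences
\[ s^i \equiv 1+i2^m,\qquad t^j\equiv 1-j2^m \pmod{2^{2m}} \]
hold for all integers $i,j$, including negative ones. These give $s^i-1\equiv i2^m$ and $t^j-1\equiv -j2^m \pmod{2^{2m}}$. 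Hence
\[ (s^i-1)(t^j-1)\equiv -ij2^{2m} \pmod{2^{3m}}. \]
Also $s^it^i = (1-2^{2m})^i \equiv 1-i2^{2m} \pmod{2^{4m}}$. Combining these, the bracket satisfies
\[ (s^it^i-1)+(s^i-1)(t^j-1)\equiv -i(1+j)2^{2m}\pmod{2^{2m+1}}. \]
In particular, the bracket is always divisible by $2^{2m}$.

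\emph{Condition (3): case split on the parity of $j$.}
\begin{itemize}
\item If $j=0$, the factor $s^j-1$ vanishes, so there is nothing to prove.
\item If $j$ is odd, then $1+j$ is even, so the bracket is divisible by $2^{2m+1}$. Since $v_2(s^j-1)\geq m$, the product is divisible by $2^{3m+1}$.
\item If $j\neq 0$ is even, then $s^j-1 = (s^2)^{j/2}-1$ is divisible by $s^2-1 = 2^{m+1}(1+2^{m-1})$, so $v_2(s^j-1)\geq m+1$. Together with $2^{2m}$ dividing the bracket, the product is divisible by $2^{3m+1}$. The same argument covers $m=1$, where $s=3$ and $3^j-1$ is divisible by $8$ for even $j$.
\end{itemize}

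The main obstacle is condition (3). It only works because the two terms in the bracket combine to the factor $i(1+j)$, and this factor's parity compensates exactly for the possible deficit in $v_2(s^j-1)$. So the key step is carrying the expansions correctly to order $2^{2m+1}$ and justifying them for negative exponents.
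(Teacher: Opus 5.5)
Your proof is correct, and its skeleton matches the paper's. Conditions (1), (2), (4) follow by direct substitution. Condition (3) follows by splitting on the parity of $j$: when $j$ is even the extra factor of $2$ comes from $s^j-1$, and when $j$ is odd it comes from cancellation inside the bracket.

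The difference lies in how the $2$-adic information is obtained.

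\textbf{The paper's route.} It uses the lifting-the-exponent formula $v_2(r^k-1)=v_2(r-1)+v_2(k)$ for $r\equiv 1 \pmod 4$. This gives exact valuations of $s^j-1$, $s^it^i-1$ and $(s^i-1)(t^j-1)$. Because $s=3\not\equiv 1 \pmod 4$ when $m=1$, the paper must handle $n=4$ separately, by a hand computation with $(s,t)=(3,-1)$. It disposes of negative exponents simply by assuming $i,j\geq 1$.

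\textbf{Your route.} You use only first-order binomial congruences in $\mathbb{Z}_2$. These give the leading term $-i(1+j)2^{2m}$ of the bracket (modulo $2^{2m+1}$, in fact modulo $2^{3m}$). This is less than the paper's exact valuations, for instance $v_2=3m+v_2(i)+v_2(j)$ for even $j$, but it is exactly what is needed. It also buys two things:
\begin{itemize}
\item it works uniformly for every $m\geq 1$, so $n=4$ needs no separate case;
\item it treats negative $i,j$ directly.
\end{itemize}

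Two small points:
\begin{itemize}
\item You leave one line implicit. The map $i\mapsto 1+i2^m$ is a homomorphism $\mathbb{Z}\to(\mathbb{Z}/2^{2m}\mathbb{Z})^\times$ sending $1$ to $s$, and likewise with $-2^m$ for $t$ and $-2^{2m}$ modulo $2^{4m}$ for $st$. This justifies the congruences for negative exponents.
\item In the even case you can read $v_2(s^j-1)\geq m+1$ straight off $s^j-1\equiv j2^m \pmod{2^{2m}}$, without factoring through $s^2-1$.
\end{itemize}
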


\begin{proof}For conditions (1), (2), and (4), they may be rewritten as follows:
\begin{enumerate}[(1)]
\item[(1)] $(2^m)^2(-2^m)^2 \equiv 0\pmod{2^{3m+1}}$;
\item[(2)] $(-2^{2m})^2 \equiv0 \pmod{2^n}$;
\item[(4)] $(2^m)(-2^{2m})\not\equiv 0\pmod{2^{3m+1}}$.
\end{enumerate}
All of them are obvious. For condition (3), let $i,j\in \mathbb{Z}$, and consider
\[ \spadesuit_{i,j} = (s^j-1)\left\{(s^it^i -1) + (s^i-1)(t^j-1)\right\}.\]
We need to show that $\spadesuit_{i,j}\equiv 0\pmod{2^{3m+1}}$. We may assume that $i,j\geq 1$.  

\smallskip

First, suppose that $n=4$. Then $m=1$ and $(s,t)=(3,-1)$. We have
\[ \spadesuit_{i,j}\equiv \begin{cases}
(3^j-1)(( -3)^i-1)&\hspace{-5mm}\pmod{2^4}\mbox{ when $j$ is even},\\
(3^j-1)(3^i-1)(-1)^j &\hspace{-5mm}\pmod{2^4}\mbox{ when $i$ is even},\\
-(3^j-1)(3^{i+1}-1)&\hspace{-5mm}\pmod{2^4}\mbox{ when $i,j$ are both odd}.
\end{cases}\]
Since $(-3)^k,3^k\equiv 1\pmod{2}$ for all $k$, and $3^k\equiv1\pmod{8}$ for $k$ even, in all three cases we see that $\spadesuit_{i,j}\equiv 0\pmod{2^{4}}$.
 
\smallskip

Next, suppose that $n\geq 7$. Then $m\geq 2$ and so $s,t\equiv1\pmod{4}$. We have 
\[v_2(\spadesuit_{i,j}) = v_2(s^j-1) + v_2((s^it^i-1) + (s^i-1)(t^j-1)).\]
For any $r\equiv1\pmod{4}$ and $k\geq 1$, the lifting-the-exponent lemma yields
\[ v_2(r^k-1) = v_2(r-1) + v_2(k).\]
Applying this to $r=s,t,st$, we obtain
\begin{align*}
v_2(s^j-1) & = m + v_2(j),\\
v_2(s^it^i-1) & = 2m + v_2(i),\\
v_2((s^i-1)(t^j-1)) & = 2m + v_2(i) + v_2(j).
\end{align*}
For any integers $r,r'$, observe that
\[ v_2(r+r') = \begin{cases} \min\{ v_2(r),v_2(r')\} & \mbox{when }v_2(r)\neq v_2(r'), \\
v_2(r) + v \,\ (\exists v\geq 1) & \mbox{when }v_2(r) = v_2(r').
\end{cases}\]
Applying this to $r= s^it^i-1$ and $r' = (s^i-1)(t^j-1)$, we deduce that
\[ v_2(\spadesuit_{i,j})  = \begin{cases}
m + v_2(j) + 2m + v_2(i) &\mbox{when }v_2(j)\geq 1,\\
m + v_2(j) + 2m+v_2(i) + v \,\ (\exists v\geq 1)&\mbox{when }v_2(j)= 0.
\end{cases}\]
In both cases, we get that $\spadesuit_{i,j}\equiv 0\pmod{2^{3m+1}}$.

\smallskip

This completes the proof.
\end{proof}


%
%
%
%
%
%

\section*{Acknowledgements}

The author would like to thank Noriyuki Abe for helpful discussion.

\smallskip

This work is supported by JSPS KAKENHI 24K16891.

\end{document}